\theoremstyle{plain}
\newtheorem{lemma}{Lemma}[section]
\newtheorem{corollary}[lemma]{Corollary}
\newtheorem{theorem}[lemma]{Theorem}
\theoremstyle{definition}
\newtheorem{definition}{Definition}
\theoremstyle{remark}
\newtheorem{remark}[lemma]{Remark}
\newcommand{\calA}{\mathcal{A}}
\newcommand{\calC}{\mathcal{C}}
\newcommand{\N}{\mathbb{N}}
\newcommand{\mathN}{\mathbb{N}}
\newcommand{\R}{\mathbb{R}}
\newcommand{\mathC}{\mathbb{C}}
\newcommand{\calK}{\mathcal{K}}
\newcommand{\FU}{FU}
\newcommand{\XU}{XU}
\def \Poisstr {{\bs K}}
\newcommand{\bs}{\boldsymbol}
\DeclareMathOperator{\dist}{\mathrm {dist}}
\DeclareMathOperator{\distnu}{\mathrm {d}_\nu}
\DeclareMathOperator{\Interior}{\mathrm {Int}}
\numberwithin{equation}{section}
\begin{document}
 \baselineskip=17pt
\fontsize{11}{12}
\selectfont

 \setcounter{page}{1}

\title[Universal harmonic functions and martingales on trees]{Frequently dense harmonic functions and universal martingales on trees}
\author{Evgeny Abakumov, Vassili Nestoridis, Massimo~A. Picardello}
\address{LAMA, Universit\'{e} Gustave Eiffel, Universit\'{e} Paris Est Creteil, CNRS\\
F-77454, Marne-la-Vall\'{e}e\\France}
\email{evgueni.abakoumov@u-pem.fr}
\address{Mathematics Department\\
National and Kapodistrian University of Athens, Panepistimioupolis, \\GR-15784, Athens\\Greece} \email{vnestor@math.uoa.gr}
\address{Dipartimento di Matematica\\
Universit\`a di Roma ``Tor Vergata'', Via della Ricerca
Scientifica\\00133~Roma\\Italy} \email{picard@mat.uniroma2.it}
\subjclass{05C05, 31A20, 60J45}
\keywords{Non-homogeneous trees, transient transition operators, harmonic functions,
boundary of a tree, Poisson transform, martingale associated with a harmonic function,
universal functions on trees}

\thanks{ 
The first author  is partially supported by 
project ANR-18-CE40-0035.
The last author is partially supported by MIUR Excellence Departments Project awarded to
the Department of Mathematics, University of Rome Tor Vergata, CUP E83C18000100006
}

\begin{abstract}
On a large class of infinite trees $T$, we prove the existence of harmonic functions $h$, with respect to suitable transient transition operators $P$,
that satisfy 
the following universal property:  $h$ is  the Poisson transform of a martingale on the end-point boundary $\Omega$ of $T$ (equipped with the harmonic measure induced by $P$) t such that, for every measurable function $f$ on $\Omega$, it contains a subsequence converging to $f$ in measure.  Moreover, the martingale visits every open set of measurable functions with positive lower density.
\end{abstract}

\maketitle

\section{Introduction.}\label{Sect:Intro}
We start by outlining our main ideas and goals: precise definitions will be given later. This paper studies density, frequent density and universal properties of martingales and harmonic functions with respect to nearest neighbor transition operators on an infinite tree $T$. The simplest tree is a chain of vertices, isomorphic to the integers. Consider, for simplicity, the symmetric nearest neighbor transition operator with equal probability to move to each of the two neighbors (the \emph{isotropic} operator): then its harmonic functions are linear functions, whose image cannot be dense in $\mathC$. Similarly, density fails for any nearest neighbor transition operator. But if a tree has infinitely many bifurcation vertices (for instance, a binary tree), and we start with any arbitrary value of a function $f$ at a given vertex, then it is clear that there are enough degrees of freedom to extend $f$ harmonically at each vertex. More generally, let us consider a nearest neighbor transition operator with strictly positive transition coefficients from every vertex to each neighbor,
it was shown in \cite{ANP} that there is a large class of infinite subsets $R$ of $T$ such that every function with preassigned values therein can be extended to a harmonic function on $T$; it is trivial to choose the values so that the image is dense. Moreover, given any enumeration of these values, the image can be chosen to be frequently dense in that enumeration. 

More precisely, the sets  $R=\{r_1,r_2,\dots\}\subset T$ introduced in \cite{ANP}, called \emph{ramified}, are those with enough bifurcations, in the sense that they are not contained in the union of a finite set and finitely many \emph{linear branches} (that is, chains of contiguous vertices  each of which has only two neighbors). It was proved in \cite{ANP} that
 the set 
$D(R)=D(R, T, P)$ of $P-$harmonic functions $f$ on $T$ (defined in Subsection \ref{SubS:harmonic})
with $f(R)$ dense in $\mathC$  is non-empty if and only if $R$ is ramified. A function $f\in D(R)$ is called \emph{$R$-frequently dense} for a given enumeration of $R$ if, for each non-empty open $V\subset\mathC$, the set of integers $B(f,V)=\{j\in\mathN\colon  f(r_j)\in V\}$ has \emph{strictly positive lower density}  in this enumeration; that is, $\liminf_{n\to +\infty}  
 |\{j\in B(f,V) \colon j\leqslant n\}|  
 /n>0$.
It was also proved in \cite{ANP} that, if  $R$ is ramified,  the following density properties hold: 
$D(R)$ is  a dense $G_\delta$ subset of  the space of $P-$harmonic functions $H_P(T)$,  $D(R)\cup \{0\}$ contains a dense vector subspace of $H_P(T)$, and if $R$ ramifies frequently enough (namely, 
if $R$ is an infinite subset of $T$ such that every linear branch of $T$
contains at most $C$ elements of $R$ for some $C>0$ independent of the linear branch), then for any enumeration of $R$ the set
$FD(R)$ of its frequently dense harmonic functions 
is dense in $H_P(T)$
(otherwise, there always exists an enumeration  of $R$
for which $FD(R)$ is empty).

All this was proved in \cite{ANP} for all nearest neighbor transition operators such that every vertex has strictly positive transition probability to each neighbor. But if, in addition,  the transition operator $P$ is \emph{transient}, that is if it generates a random walk going to the boundary at infinity $\Omega=\Omega(T)$ (that consists of all geodesic rays starting at any fixed reference vertex), then $P-$harmonic functions can be reconstructed from their boundary values; more precisely, they are obtained by finitely additive measures on $\Omega$, in the sense that they are  Poisson integrals of finitely additive boundary measures, also called distributions. 
Conversely, every distribution on the boundary gives rise to a $P-$harmonic function via the Poisson integral.

Here is another way to regard boundary distributions of harmonic functions on a tree, in terms of \emph{martingales} on its boundary $\Omega$. Indeed, the choice of a reference vertex $o$ makes it meaningful to consider its descendants $v$ of any given generation $n\geqslant 0$ (for $n=0$ one has $v=o$); we say that these vertices have length $n$. Then $\Omega$ splits as the disjoint union of the subsets $\Omega(v)$ of all geodesic rays starting at $o$ and containing $v$, that we call \emph{arcs of the $n$-th generation}. This splitting induces on $\Omega$ a totally disconnected compact topology. Consider the family of nested $\sigma$-algebras $\calA_n$ generated by the arcs of the $n$-th generation. By integration on arcs of the $n$-th generation, every finitely additive measure $\nu$ on $\Omega$ projects to an $\calA_n$-measurable function $f_n$ that converge to $\nu$ in the sense of distributions, that is, in measure. Let us denote by $\pi_n$ these projections. Then, clearly, $\pi_n\pi_m\nu=\pi_m\pi_n\nu=\pi_{\min\{n,m\}}\nu$; in other words, the sequence of projections $\{\pi_n\nu\}$ is a martingale.

Then we can start with martingales on $\Omega$ instead of harmonic functions on $T$, and use the projections of a martingale, or equivalently a finitely additive measure $\phi$, to build a sequence of functions $\{f_n=\pi_n\phi\}$  on $\Omega$. Once a reference vertex $o$ is chosen and fixed, each such function
 can be naturally regarded as a function $f^\dagger_n$ on the vertices of length $n$ (and conversely, every function $h$ defined on the set $C_n$ of all vertices of length $n$ lifts to a function $h^*_n$ on $\Omega$ that is constant on arcs of the $n$-th generation).
The collection of all the $f^\dagger_n$, each defined on  $C_n$, gives rise to a unique function $f^\dagger$ on $T$, and it is immediate to see that $f^\dagger$ is harmonic with respect to a suitable transition operator. 
Similarly, every (normalized) positive measure $\nu$ on $\Omega$ (hence a Borel measure) gives rise to a martingale whose projections yield a function $\nu^\dagger$ on the vertices of $T$ and a forward-only nearest neighbor transition operator $Q$, necessarily transient; $\nu^\dagger$ is a $Q-$harmonic function. Note that    $\nu$ is the hitting distribution on $\Omega$ of the random walk induced by $Q$ starting at $o$.  Instead, if $\nu$ is not positive, we still obtain an operator $Q$ and a $Q-$harmonic function, but the transition coefficients of $Q$ are no longer positive.

To summarize: once a vertex $o$ is chosen, every positive Borel measure on the boundary projects to a function on $T$ that is $Q$-harmonic for a suitable (unique) forward-only transition operator; moreover, by the same family of projections (based on the measure $\nu$ on the boundary given by the hitting distribution of $Q$), every measurable function on $\Omega$ becomes a $Q-$harmonic function on $T$, and the same representation holds for measures or distributions (i.e., finitely additive measures) on $\Omega$.
 This function depends on the choice of $o$, but the corresponding harmonic function under a different choice of $o$ is the Poisson transform of an equivalent 
 measure or distribution on $\Omega$.

We  shall prove universal properties and frequent universal properties for  martingales on the boundary of trees with finite linear branches. As a consequence of the identification of harmonic functions and martingales, these universal properties can be simply rephrased in terms of harmonic functions of forward-only transition operators. We shall systematically identify boundary martingales with $Q-$harmonic functions ($Q$ being forward-only), and express all results in terms of these functions. Given any sequence $\bs n$ of radii, we consider the set $U(\bs n)$ of functions $h \in H_Q$ whose restrictions to the circles $C_{n_j}$, $n_j\in\bs n$,  regarded as locally constant functions on $\Omega$, are dense in the space of measurable functions on $\Omega$, and prove that this set is dense in $H_Q$ and that $U(\bs n) \cup \{0\}$ contains a dense vector subspace of $H_Q$. Moreover, the set of functions $h\in H_Q$ that visit every open subset of the space of measurable functions on $\Omega$ with positive lower density is also dense in $H_Q$. These results were announced in \cite{ANP}; they  can be regarded as discrete analogues of the results of \cite{B} on holomorphic functions.

Finally, we observe that the previous statements about frequently dense martingales reduce attention to the associated $Q-$harmonic functions. For most of the results, 
$Q$ is a forward-only operator with respect to a given root vertex. On the other hand, in Section \ref{Sec:Universality_for_very_regular_operator} we  extend the scope of our result to strictly positive nearest neighbor transition coefficients as follows. We have shown that, once we start with a positive Borel measure $\nu$ on $\Omega$, there is a unique forward-only transition operator $Q$ such that $\nu$ is the hitting distribution of the random walk generated by $Q$ and starting at $o$. But this is not the only transient operator that has $\nu$ as hitting distribution on the boundary; there is a large amount of such transient nearest neighbor transition operators $P$, and it was shown in \cite{Wo} (see also \cite[Section 7.2]{BGPW}) how to construct all of them explicitly. For each such $P$, the associated Poisson representation identifies $\nu$ with a $P-$harmonic function on $T$, and maps the space of all distributions on $\Omega$ onto the space $H_P$. So we can apply our methods for frequent universality to all $P-$harmonic functions for a huge class of transition operators $P$.

For this purpose, one of  the most natural classes of  operators is  the
\emph{very regular} nearest neighbor transition operators (that is, those whose transition probabilities to the neighbors are bounded from below by some $\delta>0$ and the backward transition probabilities are bounded (from above) above by $\frac12 - \delta$). These operators are transient, and are a subclass of those considered in \cite{ANP}. In Section \ref{Sec:Universality_for_very_regular_operator} we extend all our universality results to the spaces of  harmonic functions of very regular nearest neighbor transition operators, through estimates for their hitting distribution in the boundary \cite{KPT}.

\section{Notation and preliminaries}\label{Sect:preliminaries}
\subsection{Trees.}\label{SubS: Trees}

Notation on trees is not completely uniform; we follow most of the terminology established in \cite{AP, Ca, FP}. Here is
a review. A \emph{tree} $T$ is a connected, simply connected 
locally finite 
countable graph with no non-trivial loop and no terminal vertex (that is, each vertex has at least two neighbors).
With abuse of notation we shall also write $T$
for the set of vertices of the tree. In contrast with \cite{AP}, here we do not assume that $T$ is homogeneous; the number of edges
joining at every vertex of $T$ may vary, but stays finite. For $x$, $y \in T$ we write $x \sim y$ if $x$, $y$ are
neighbors. For any $x$, $y \in T$ there exist a unique $n \in \N$
and a unique minimal finite sequence $(z_0, \dots,z_n)$ of distinct
vertices such that $z_0=x$, $z_n=y$ and $z_k \sim z_{k+1}$ for all
$k<n$; this sequence is called the \emph{geodesic} path from $x$ to
$y$  and is denoted by $[x,y]$. The integer $n$ is called the
\emph{length} of $[x,y]$ and is denoted by $\dist(x,y)$; $\dist$ is a metric
on $T$. We fix a reference vertex $o \in T$ and call it the
\emph{origin}.
\label{origin}
The choice of $o$ induces a partial
ordering in $T$; $x \leqslant y $ if $x$ belongs to the geodesic
from $o$ to $y$. We denote by $y_-$ the unique neighbor of $y\neq o$ that belongs to this geodesic arc, that is, such that $y_-<y$ (the \emph{father} of $y$).

For $x \in T$, the length $|x|$ of $x$ is defined
as $|x|=d(o,x)$.  

\noindent The \emph{sector} $S(x)$ \label{sector}generated by a
vertex $x\neq o$ is the set of vertices $v$ such that $x\in [o,v]$.

For $k \in \N$ let $C_k$ be the
\emph{circle} $\{ x\in T\colon  |x|=k \}$, and $B_k$ the \emph{ball} $\{
x\in T\colon  |x|\leqslant k \}$.

We assign a stochastic nearest neighbour transition operator $P$, that is, a set of non-negative transition probabilities
$p(x,y)$ on the vertices $x,y\in T$  such that $p(x,y)=0$ if $x\not\sim y$ and $\sum_{y\colon  y\sim x} p(x,y)=1$ for every $x\in T$. This operator acts on functions $f$ on $T$ by the rule $P f(x) \equiv
\sum_{y\sim x} p(x,y)\,f(y)$.

\subsection{Harmonic functions}\label{SubS:harmonic}

\begin{definition}\label{Def:harmonic}  A function  $f\colon  T \to \R$ is harmonic at $x\in T$ if $P f = f$. A function is harmonic on $T$
if it is harmonic at every vertex of $T$, that is, if it is an eigenfunction of $P$ with eigenvalue 1.
\end{definition}

The space of harmonic functions on $T$ with respect to a transition operator $P$ is denoted by $H_P$. This space is equipped with the relative topology induced by the product (cartesian) topology of $\mathC^T$, which is a metric topology; the distance between $f$ and $g\in \mathC^T$ is given by
\begin{equation}\label{eq:metric}
\dist(f,g)=\sum_{j\in\mathN}\frac 1 {2^j}\,\frac{|f(x_j)-g(x_j)|}{1+|f(x_j)-g(x_j)|}\,,
\end{equation}
where $\{x_j\}$ is an enumeration of all vertices of $T$. The topology induced by this distance is the Cartesian topology and does not depend on the enumeration. Clearly, $H_P$ is closed in $\mathC^T$ and therefore is a metric space.

Note also that, since $T$ is countable, the space of all functions on $T$ (hence also the closed subspace $H_P$) is separable in this metric. A countable dense subset is the space of all finitely supported functions whose values have rational real and imaginary parts.

\subsection{Martingales on the boundary of a tree}\label{SubS:Martingales}
We define the boundary $\Omega$ of  tree $T$ as the set of infinite geodesics starting at $o$. In
analogy with the previous notation, for $\omega \in \Omega$ and
$n\in \N$, $\omega_n$ is the vertex of length $n$ in the geodesic
$\omega$. For $x \in T$  the \emph{interval} $I(x) \subset \Omega$,
generated by $x$, is the set $I(x)= \left\{ \omega \in \Omega\colon 
x=\omega_{|x|} \right\}$. The sets $I(\omega_n)$, $n \in \N$, form
an open base at $\omega$ of a topology of $\Omega$. Equipped with this topology
$\Omega$ is  compact and totally disconnected. 
The boundary arcs  $I(v)$ subtended by $v$ with $|v|=n$ form a $\sigma$-algebra $A_n$ (that is finite since the tree is locally finite), and these $\sigma$-algebras are nested; $A_n\subset A_{n+1}$ for every $n$. These nested $\sigma-$algebras generate the Borel $\sigma-$algebra $A$ on $\Omega$.

Choose any probability measure $\nu$ on $\Omega$, that is, a normalized positive Borel measure. Then 
the maps 
\begin{equation}\label{eq:projections}
\pi_n f= \sum_{|v|=n} \left( \frac 1{\nu(I(v))} \int_{I(v)} f \,d\nu\right)\chi_{I(v)}
\end{equation}
are   projections from
$L^1(\Omega)$ to locally constant functions in $L^\infty(\Omega)$ that are  $A_n$-measurable.

This family of projections forms a martingale: 
\begin{equation}\label{eq:martingale_property}
\pi_m\,\pi_nf=\pi_{\min\{m,n\}}f.
\end{equation}

For $f\in L^1(\Omega)$ we project the sequence $\pi_n f$ to a function on $T$ defined by
\begin{equation}\label{eq:projection:Omega->T}
f^\dag(v)=   \left. \vphantom{ \pi_{|v|}f|_{I(v)} } \pi_{|v|}f\right|_{I(v)}.
\end{equation}
Conversely, let
\begin{equation}\label{eq:forward_probabilities}
q(u,w)=\frac {\nu(I(w))}  {\nu(I(u))}
\end{equation}
for $u\sim w, u<w$. 
It is clear that $\sum_{w>v,\,w\sim v} q(u,w)=1$ and
\begin{equation}\label{eq:hitting_distribution_of_forward_only_operator_is_multiplicative}
\nu(I(v))=\prod_{j=1}^n q(v_{j-1},v_j)
\end{equation}
 if $v=v_n$ and $[v_0,v_1,\dots v_n]$
is its geodesic path from $o$. So the transition coefficients $q(v,w)$ are transition probabilities, and, if we set  $q(v,v_-)=0$, they form a
 transition operator $Q$ that is forward-only and nearest neighbour.

This defines a lifting to the boundary from functions $h$ on $T$ to sequences  $\{ h^*_{n}\colon  n\geqslant 0\}$ of $A_n$-measurable functions on $\Omega$, by the rule $ h_n:= h|_{C_n}$ and
\begin{equation}\label{eq:lifting}
 h_n^*(\omega):=h^*(\omega,\,n)=h(v) \qquad\textrm{if}\quad |v|=n,\quad\omega\in I(v).
 \end{equation}
 Note that here $\{h_n\}$ is a sequence of functions each defined on a different domain $C_n\subset T$. We use this way of writing in order to keep notation shorter; it will be used only for the harmonic function $h$ on $T$.   
The sequence $\{ h^*_n\}$ is a martingale if and only if $h$ is $Q$-harmonic; that is, forward harmonicity with respect to $Q$ is equivalent to the martingale property induced by $\nu$. 
This lifting is the inverse of the projection \eqref{eq:projection:Omega->T}, in the sense that $(h_{|v|}^*)^\dag(v)=h(v)$ for every $v\in T$, and more generally, by the martingale property \eqref{eq:martingale_property}, if $h\in H_Q$  
\begin{equation}\label{eq:martingale_property}
(h_{m}^*)^\dag(v)=h(v)  \quad\text{ for every $|v|\leqslant m$}. 
\end{equation}
Conversely, if $h$ is any function on $T$ and $h=(h_{m}^*)^\dag$ for some $m>0$, then $\{h^*_n\}$ satisfies the martingale property  \eqref{eq:martingale_property} for indices $n\leqslant m$, in other words $h$ is $Q$-harmonic in 
the ball $B_m$; this is actually the solution of the Dirichlet problem in $B_m$ for the operator $Q$.

\section{Universal harmonic functions in the sense of Menshov}\label{Sec:Menshov}

In this section we consider universal properties for martingales on the boundary of a tree $T$. We have seen in Subsection \ref{SubS:Martingales} that  
every probability measure $\nu$ on $\Omega$  gives rise to a forward-only nearest neighbor transition operator $Q$ 
and a
$Q-$harmonic function on $T$. 
Then $\nu$ is the hitting distribution on $\Omega$ of the random walk starting at $o$ generated by $Q$. In the same way, all other measures or distributions on $\Omega$ are in one-to-one correspondence (via the martingales that they generate) with the $Q-$harmonic functions. Therefore in this Section we  limit attention to the space $H_Q$ of harmonic functions of the generic forward-only nearest neighbor transition operator $Q$ on a rooted tree $T$ (with root vertex denoted by $o$). We shall assume that $q(v_-,\,v)\neq 0$ for every $v\neq o$; the results of \cite{ANP} extend to this set-up.
If there is an edge $e=[v_-,v]$ with $q(v_-,\,v) = 0$, then the problems that we shall discuss are equivalent to the same problems on the smaller sub-tree $T'$ obtained by \emph{pruning} $T$ at the edge $e$, that is, by discarding the vertex $v$ and all its descendants. Our universal results may hold for $T$ without holding for $T'$; 
;  for instance, it may happen that  the tree $T'$ is isomorphic  to the one-dimensional tree $\mathN$  (i.e., a one-sided infinite chain of vertices), so, as we will see, no universality results hold for $T'$.

The notion of universal property goes back to Fekete (before 1914) and Menshov (1945)
and has
became a large and popular area in analysis (see \cite{G} for more references). 
Menshov's paper \cite{Me45} proves
the existence of a trigonometric series whose partial sums approximate almost everywhere every $2\pi$-periodic function. 

It is well known that if a sequence $f_n$ defined on a space with measure $m$ converges to $f$ almost everywhere  then $f_n\to f$ in measure, that is $m\{x\colon |f_n-f|>\varepsilon\}$ tends to zero for every $\varepsilon>0$ as $n\to \infty$. On the other hand,  convergence in measure in a space of finite measure implies almost everywhere convergence for a subsequence. Therefore Menshov's universal property almost everywhere is equivalent to universal approximation in measure. In a similar sense (see also \cite{ANP}) we shall prove the existence of a universal $Q-$harmonic function $h$ on a tree, where $Q$ is a forward-only transition operator determined by the choice of $h$. The measure space is, of course, the boundary $\Omega$ of $T$, 
equipped with
 the probability measure $\nu$ given by the hitting distribution of $Q$  with starting vertex  $o$. Actually, we shall prove that, for every forward-only operator $Q$, the functions $h$ in a dense subset of $H_Q$, identified to a martingale with respect to the hitting distribution $\nu$ induced by $Q$ on $\Omega$, satisfy suitable universal properties in the space of $\nu-$measurable functions.

\begin{definition} \label {def:universal_functions} Let $M$ be the space of measurable functions on $\Omega$ and, as before, let $\nu$ be the hitting distribution on $\Omega$ of the random walk starting at $o$ generated by $Q$.
For any sequence $\bs{n}\subset\mathN$ let $U(\bs{n})=U(\bs{n},T)$ be the set of functions $h\in H_Q(T)$ such that  $\{h_n =h|_{C_n}\colon  n\in\bs{n}\}$ lifts to a sequence  $\{h_n ^*\}$ dense in $ M$; that is, for every $f\in M$ there is $h\in U(\bs{n})$ and  a subsequence $\{n_j\}$ of $\bs{n}$ such that $h_{n_j}$ converges to $f$ in measure (or equivalently, pointwise $\nu-$almost everywhere). 

We shall write $U$ in place of $U(\mathN)$. Elements of $U$  are called  \emph{universal functions};  elements of $U(\bs{n})$
are universal functions with respect to $\bs n$, or $\bs n$-universal functions.
\end{definition}

\begin{theorem}\label{theo:density_of_Utilde}
If all linear branches of $T$ have finite length, then for each sequence $\bs{n}\subset\mathN$  the set $U(\bs{n})$ is a dense $G_\delta$ subset of $H_Q$ (equipped, as usual, with the topology of pointwise convergence). If $T$ has an infinite linear branch, then $U$ is empty.
\end{theorem}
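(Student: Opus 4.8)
The plan is to run a Baire category argument in $H_Q$, which is a complete metric space, being closed in $\mathC^T$ for the product (pointwise-convergence) metric. The target space is $M$, the $\nu$-measurable functions on $\Omega$ with the metric $\distnu(f,g)=\int_\Omega\tfrac{|f-g|}{1+|f-g|}\,d\nu$ of convergence in measure; $M$ is separable, a countable dense set being the $A_m$-measurable simple functions ($m\in\mathN$) with values in $\mathQ+i\mathQ$. Fix such an enumeration $\{g_k\}$, with $g_k$ being $A_{m_k}$-measurable, and for $k,\ell\in\mathN$ set
\[
V_{k,\ell}=\bigl\{h\in H_Q:\ \exists\, n\in\bs n \text{ with } \distnu(h_n,g_k)<1/\ell\bigr\},
\]
where $h_n=h|_{C_n}$ is read as a locally constant function on $\Omega$. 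Since $\{g_k\}$ is dense in $M$, the set $\{h_n:n\in\bs n\}$ is dense exactly when $h$ lies in every $V_{k,\ell}$; thus $U(\bs n)=\bigcap_{k,\ell}V_{k,\ell}$, and it suffices to prove that each $V_{k,\ell}$ is open and dense.

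For openness, fix $n$ and write $h\mapsto\distnu(h_n,g_k)=\sum_{|v|=n}\phi_v(h(v))$, with $\phi_v(z)=\int_{I(v)}\tfrac{|z-g_k|}{1+|z-g_k|}\,d\nu$. Each $\phi_v$ is continuous and $0\le\phi_v\le\nu(I(v))$, so the map is continuous for the product topology: given a tolerance, a finite set $F$ of vertices $v$ with $\sum_{|v|=n,\,v\notin F}\nu(I(v))$ below the tolerance controls the tail (the integrand is $\le 1$), while the finitely many coordinates in $F$ are controlled by a basic product neighborhood. Hence $\{h:\distnu(h_n,g_k)<1/\ell\}$ is open, and $V_{k,\ell}$, a union over $n\in\bs n$, is open.

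Density is the heart of the matter. Fix $h_0\in H_Q$ and a basic neighborhood $W=\{h:|h(v)-h_0(v)|<\delta,\ v\in E\}$, and put $N_0=\max_{v\in E}|v|$. I would build $h$ by keeping $h=h_0$ on the ball $B_{N_0}$ (automatically consistent with harmonicity below level $N_0$, since $h_0$ is itself $Q$-harmonic) and redefining $h$ beyond level $N_0$ inside each sector $S(u)$, $|u|=N_0$. By the martingale identity of Subsection \ref{SubS:Martingales}, the only surviving constraint is the conditional-expectation identity $\int_{I(u)}h_n\,d\nu=h_0(u)\,\nu(I(u))$: any $A_n$-measurable assignment on $C_n$ meeting this identity back-averages to a function harmonic on $B_n$ with $h(u)=h_0(u)$, which is then extended forward by making each child equal to its parent. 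Choosing $n\in\bs n$ with $n\ge\max(m_k,N_0)$ large, on each $I(u)$ I would set $h_n=g_k$ on a ``bulk'' (so $h_n=g_k$ exactly there, as $g_k$ is $A_n$-measurable) and reserve one ``correction'' arc $I(z_u)\subset S(u)$ of $\nu$-mass $\le\e'\,\nu(I(u))$, on which $h_n$ takes the single constant value forced by the identity. Then $\distnu(h_n,g_k)\le\sum_u\nu(I(z_u))\le\e'<1/\ell$, while $h=h_0$ on $B_{N_0}\supseteq E$ gives $h\in W$; hence $W\cap V_{k,\ell}\neq\emptyset$, and Baire's theorem finishes the proof.

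The main obstacle is the construction of the correction arcs, and this is exactly where the hypothesis on linear branches enters. Matching $g_k$ in measure while the conditional average over $I(u)$ is pinned to $h_0(u)$ is possible only because the discrepancy can be concentrated, with a large compensating value, on an arc of arbitrarily small $\nu$-mass. Such arcs exist inside every $S(u)$ precisely because all linear branches are finite: along each ray there are infinitely many branching vertices, and at each one some child carries $\nu$-mass at most half that of its parent (the smallest of $d\ge2$ transition probabilities is $\le\tfrac12$), so iterating yields descendants $z$ with $\nu(I(z))\le 2^{-j}\nu(I(u))$. A secondary technical point, for trees that are not locally finite, is that $C_{N_0}$ may be infinite: one splits the vertices $u$ of length $N_0$ into finitely many ``heavy'' ones (total $\nu$-mass $\ge1-\e'$, treated as above, so that a single finite $n$ dominates all their correction arcs) and the remaining ``light'' ones (total $\nu$-mass $\le\e'$, on which $g_k$ need not be approximated since they contribute at most $\e'$ to $\distnu$).
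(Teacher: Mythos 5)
Your proposal is correct and follows essentially the same route as the paper: a Baire category argument with $U(\bs n)$ written as a countable intersection of open dense sets built from a dense family of locally constant functions, and density obtained by matching the target on all of $C_n$ except one small correction arc per sector, whose existence (with $\nu$-mass $\leqslant 2^{-k}$ times the parent's) comes from the finiteness of linear branches via a child with transition probability $\leqslant \tfrac12$ at each branching vertex. Your explicit treatment of openness and the heavy/light splitting for non-locally-finite trees are minor refinements of details the paper leaves implicit.
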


\begin{proof}
 The space $ M=( M,\nu)$ of measurable functions on $\Omega$, equipped with convergence in measure, is a complete metric space; its metric is 
\begin{equation}\label{eq:distance_of_measurable_functions}
\distnu(f,g)=\int_\Omega \frac{|f-g|}{1+|f-g|}\;d\nu.
\end{equation}
Note that, if $f=g$ in  $E\subset\Omega$, then
\begin{equation}\label{eq:triviality_on_distance_of_two_functions_that_coincide_in_a_subset}
\distnu(f,g) < \nu(\Omega\setminus E) = 1 - \nu(E).
\end{equation}
Let us denote by $\calK_n$ the space of $A_n-$measurable functions on $\Omega$.
Each measurable function on $\Omega$ can be approximated in measure by functions in some $\calK_n$. Therefore there is a sequence $f_j\in M$ dense in measure in $ M$ and such that, for every $j$, $f_j \in \calK_{n_j}$ for some $n_j$. We can always choose $n_j\in\bs n$ by choosing it larger if necessary, since $\calK_m\subset \calK_{m+1}$ for every $m$.

For every 
$j, s\in\mathN$ let $E(j,\,s)$ be the open balls
\[
E(j,\,s)=\{ f\colon \Omega\mapsto\mathC\text{ such that } \distnu(f,f_j)<1/s\}\,.
\]
It is clear that 
\[
U(\bs{n})= \cap_{s,j > 0} \cup_{n\in\bs{n}} \{h\in H_Q\colon  h_n ^* \in E(j,\,s)\}\,.
\]
Here, as before, we let $h_n $ be the restriction of $h$ to vertices of length $n$.
The lifting $h\to h_n ^*$ is continuous from $H_Q$ to $M$, hence $\{h\in H_Q\colon  h_n ^* \in E(j,\,s)\}$ is open. Therefore $U(\bs{n})$ is a $G_\delta$ in $H_Q$. By Baire's theorem, it is enough to show that $\cup_{n\in\bs{n}} \{h\in H_Q\colon  h_n ^* \in E(j,\,s)\}$ is dense in $H_Q$. From now on let us fix $j$ and $s$.

It is enough to show that, for every $s>0$, $N\in\mathN$ and $g\in H_Q$, there is $n\in\bs{n}$ and $h\in H_Q$ such that
$h_n ^*\in E(j,\,s)$ and $h=g$ in the ball $B_N=\{v\colon  |v|\leqslant N\}$.

Since $f_j\in \calK_{n_j}$, it is constant in the sectors subtended by vertices of length $n_j$. 
Choose $k > \log_2 s$  and $n\in\bs{n}$, $n\geqslant n_j$, $n\geqslant N+k$ and let $h=g$ in $B_{N-1}$. Let $|u|=N$. Since there are no infinite linear branches,
there is a descendant $u_1>u$ such that its father $(u_1)_-$ has at least two children and satisfies $q((u_1)_-,\,u_1) \leqslant 1/2$.
In the same way, there is a geodesic path containing $k$ vertices (not necessarily contiguous) $u=u_0< u_1 < u_2 < \dots < u_k(u)$ such that 
\begin{equation}\label{eq:decay_when_jumping_to_a_forward_neighbor}
q((u_i)_{-},\,u_{i})\leqslant \frac12
\end{equation}
for $1\leqslant i \leqslant k$. Let $|u_k(u)|=n(u)$ and let $m=\max \{n(u)\colon |u|=N\}$. Without loss of generality we can assume that all $u_k(u)$ for $|u|=N$ have length $m$ (otherwise replace $u_k(u)$ with one of its descendants of length $m$). By choosing a larger $m$ we can also assume $m\in\bs{n}$ and $m\geqslant n_j$, so $f_j\in\calK_m$.
Now, denote by $G_u$ the set of descendants of $u$ of length $|m|$. 
For $v\in G_u, \,v\neq u_k(u)$, let $h(v)=f_j^\dag(v)$. Observe that $h$ coincides with $f_j^\dagger$ on the vertices of length $m$ except $\{u_k(u)\colon |u|=N\}$, where it has not yet been defined. By the multiplicativity rule \eqref{eq:hitting_distribution_of_forward_only_operator_is_multiplicative} each arc subtended by $u_k(u)$ has measure less than $1/2^k$ times the measure of the arc subtended by $u$.
Therefore, whatever value we assign to $h$ on the exceptional vertices $u_k(u)$ for $|u|=N$, it follows from \eqref{eq:triviality_on_distance_of_two_functions_that_coincide_in_a_subset} that
 $\distnu(h_m ^*, \,f_j) \leqslant  1/2^k < 1/s$; if we can extend $h$ to a harmonic function, this shows that $h_m ^*\in E(j,\,s)$. Now, since $u':=u_k(u)$ is the only vertex in $G_u$ where $h$ is undefined, by iterating the forward-only harmonicity rule $k=m-N$ times we see that there exists one and only one value $h(u')$ such that
$q^{(k)}(u,u') h(u') + \sum_{v\in G_u,\,v\neq u'} q^{(k)}(u,v) h(v)  = g(u)$;
 indeed, here $q^{(k)}(u,v)$ is the $k$-th iterate $Q^k$ of the transition operator and is given by the product of  the coefficients $q(w,w_-)$ for $w$  in the path from $u$ to $u'$, $u$ excluded, hence it is non-zero.
 This value of
 $h(u_k(u))$ for $|u|=N$ makes the function $h:=(h_m ^*)^\dag$ satisfy the rule  $h(u)=(h_m ^*)^\dag(u)=\pi_{N} h_m ^*(u) = \pi_{N} g_m^*(u)=(g_m^*)^\dag(u)=g(u)$ by \eqref{eq:martingale_property}. In other words, the harmonic continuation of $h$ from $C_m$ to $B_m$ given by the solution of the Dirichlet problem (stated at the end of Section \ref{Sect:preliminaries}) satisfies $h=g$ in $C_N$. Then $h$ is harmonic in $B_{m-1}$, and since $h=g$ on $C_N$ and both functions are harmonic, $h=g$ in $B_N$ again by the maximum principle.  
 We only need to extend $h$ harmonically to all of $T$. Since $Q$ is forward-only, this is easy:
 we let $h$  be constant on each sector subtended by vertices $x$ of length $m$, by setting $h(y)=h(x)$ for every $y>x$. Hence $h$ is $Q$-harmonic everywhere, 
and $h_n ^*\in E(j,\,s)$ and $h=g$ in $B_N$.

The last sentence of the statement follows from the fact that harmonic functions of a forward-only transition operators are constant along linear branches.
\end{proof}

Now we shall prove algebraic genericity.
\begin{theorem}\label{theo:spaceability}
Under the hypothesis of Theorem \ref{theo:density_of_Utilde},  $U(\bs{n}) \cup \{0\}$  contains a dense vector subspace of $H_Q$.
\end{theorem}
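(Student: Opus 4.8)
The plan is to construct a countable family $\{u_k\}_{k\ge1}\subset U(\bs{n})$ that is dense in $H_Q$ and such that every nonzero finite linear combination of the $u_k$ again belongs to $U(\bs{n})$; then $W=\linspan\{u_k\}$ is a dense vector subspace with $W\subset U(\bs{n})\cup\{0\}$. The difficulty is the combination-universality, and I would not try to prevent the wild martingales $(u_k)_n^*$ from cancelling each other directly: the tower property $\pi_m (u_k)_n^*=(u_k)_m^*$ shows that a universal function is necessarily wild at arbitrarily high levels, so "localising" the wildness of the $u_k$ on disjoint blocks of levels is impossible. Instead I would reduce the problem to a joint-universality statement: it suffices to build the $u_k$ so that, for every $m$, the joint trajectory $n\mapsto\bigl((u_1)_n^*,\dots,(u_m)_n^*\bigr)$, $n\in\bs{n}$, meets every nonempty open subset of $M^m$ (with the product of the metrics $\distnu$) along infinitely many $n$. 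Granting this, let $v=\sum_{i=1}^m c_i u_i\ne0$ and $p=\max\{i:c_i\ne0\}$. For any target $f\in M$ the point $(0,\dots,0,f/c_p)\in M^p$ is approached by the joint trajectory along infinitely many $n\in\bs{n}$, i.e. $(u_p)_n^*\approx f/c_p$ and $(u_i)_n^*\approx0$ for $i<p$ simultaneously; since convergence in measure is preserved by fixed scalar multiples and finite sums, $\sum_{i\le p} c_i (u_i)_n^*\approx f$ along these $n$, whence $v\in U(\bs{n})$. This reduction is the conceptual heart: it replaces the continuum of coefficient vectors by a condition that need only be tested against a countable dense set of target tuples in $M^m$.

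For the density of the span I would fix a countable dense subset $\{y_k\}$ of $H_Q$ and force $u_k=y_k$ on a ball $B_{N_k}$ with $N_k$ large enough that $\dist(u_k,y_k)$ is as small as we wish; then $\{u_k\}$, hence $\linspan\{u_k\}$, is dense. A convenient choice for $\{y_k\}$ is the set of locally constant harmonic functions with values in $\mathQ+i\mathQ$, that is, the $Q$-harmonic functions that are constant on every sector $S_x$ with $|x|=m$, for some $m$. These are dense: given $g\in H_Q$ and a ball $B_K$, pick $m>K$ and let $y$ be constant on the sectors beyond $C_m$ with $y|_{C_m}=g|_{C_m}$; the forward-only harmonicity $y(x)=\sum_{w\sim x,\,w>x} q(x,w)\,y(w)$ then propagates the equality $y=g$ downward from $C_m$, so $y=g$ on all of $B_m\supset B_K$.

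The construction of the $u_k$ is a synchronised parallel version of the single-function argument in the proof of Theorem \ref{theo:density_of_Utilde}. Enumerate all tasks $(m,\bs{g},s)$, where $\bs{g}=(g_1,\dots,g_m)$ ranges over a countable dense subset of $M^m$ consisting of tuples of locally constant functions and $s\in\mathN$, and assign to each task an infinite set of levels in $\bs{n}$, these sets being pairwise disjoint. Working through the tasks, for the level $n\in\bs{n}$ currently handled for a task $(m,\bs{g},s)$ — chosen larger than every level used so far — I would, for each $i\le m$ separately, prescribe $(u_i)_n^*$ to equal $g_i$ off a family of arcs of total $\nu$-measure less than $1/s$, using the values on those small arcs exactly as in Theorem \ref{theo:density_of_Utilde} to correct the conditional averages over the arcs of the previous level at which $u_i$ was constrained, so that each $u_i$ stays a martingale (equivalently, $Q$-harmonic) and keeps its frozen values on $B_{N_i}$. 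Because the $u_i$ are independent functions on $T$, these prescriptions never conflict, and the functions $u_j$ with $j>m$ are merely carried along by projection. For each $(m,\bs{g})$ the levels of the tasks $(m,\bs{g},s)$, $s\in\mathN$, then provide infinitely many $n$ with $\bigl((u_1)_n^*,\dots,(u_m)_n^*\bigr)$ arbitrarily close to $\bs{g}$; as $\bs{g}$ runs through a dense subset of $M^m$ this gives the required density.

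The main obstacle is exactly this simultaneous realisation: one must drive each of the finitely many relevant martingales to an arbitrary prescribed locally constant value on a common circle $C_n$ while preserving $Q$-harmonicity and the earlier agreement on $B_{N_i}$. This is where the hypothesis that all linear branches of $T$ are finite enters, precisely as in Theorem \ref{theo:density_of_Utilde}: it guarantees enough bifurcations beneath each vertex to host the small correcting arcs that absorb the backward consistency constraints, and it keeps the $\nu$-measure of those arcs controllable through the multiplicativity rule \eqref{eq:hitting_distribution_of_forward_only_operator_is_multiplicative}. The remaining points are routine: the continuity of scalar multiplication on $(M,\distnu)$, the fact that by the tower property consistency need only be imposed relative to the previous constrained level, and the bookkeeping of the diagonal enumeration.
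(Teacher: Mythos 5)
Your proposal is correct, but it reaches the conclusion by a genuinely different route from the paper. The paper's proof is a soft, formal deduction from Theorem~\ref{theo:density_of_Utilde} alone: it picks a dense sequence $\{g_j\}$ in $H_Q$ and builds $h_j$ together with \emph{nested} subsequences $\bs{n}=\bs{n}_0\supset\bs{n}_1\supset\cdots$, where $h_j$ is chosen in $U(\bs{n}_{j-1})$ (which is dense by Theorem~\ref{theo:density_of_Utilde} applied to the subsequence $\bs{n}_{j-1}$) with $\dist(h_j,g_j)<1/j$, and $\bs{n}_j\subset\bs{n}_{j-1}$ is a subsequence along which $(h_j)_n^*\to 0$ in measure (such a subsequence exists simply because $0\in M$ is one of the universal targets). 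For a combination $\sum_{j\le m}c_jh_j$ with $c_m\ne 0$ one then approximates $f/c_m$ by $(h_m)_n^*$ along a further subsequence of $\bs{n}_{m-1}$, along which all lower-index terms vanish automatically. Your argument replaces this nesting-and-vanishing device by a \emph{joint universality} statement for tuples $\bigl((u_1)_n^*,\dots,(u_m)_n^*\bigr)$ in $M^m$, which you obtain by rerunning the correction construction of Theorem~\ref{theo:density_of_Utilde} in parallel for finitely many independent functions at a common level; your observation that the prescriptions for distinct $u_i$ cannot conflict, and that consistency need only be enforced against the previously constrained level via the tower property, is sound, and the reduction of a continuum of coefficient vectors to a countable dense set of target tuples $(0,\dots,0,f/c_p)$ is the right one. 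What each approach buys: the paper's route is shorter and reuses only the \emph{statement} of Theorem~\ref{theo:density_of_Utilde} (for every subsequence of $\bs{n}$), whereas yours reopens its \emph{proof} but delivers a strictly stronger conclusion (simultaneous approximation of arbitrary tuples along the full sequence $\bs{n}$, rather than single targets along ever-thinner subsequences), at the cost of heavier bookkeeping. Both are standard and both are valid here.
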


\begin{proof} 
The space $H_Q$, equipped with the distance \eqref{eq:metric}, is separable. Choose a dense sequence $\{g_j\in H_Q\}$. Let $\bs{n}_0=\bs{n}$. By definition of $U(\bs{n})$, there is $h_1\in U(\bs{n})$ with $\dist(h_1,\,g_1)<1$, and a subsequence $\bs{n}_1\subset \bs{n}_0$ such that
$\lim_{n\in\bs{n}_1} (h_1)_{(n)}^* = 0$ (here the limit is in measure on $\Omega$). Then we build $h_2 \in 
U(\bs{n}_1)$ such that $\lim_{n\in\bs{n}_2} (h_2)_{(n)}^* = 0$ and $\dist (h_2,\,g_2)<1/2$, and, by iterating the argument, we produce nested sequences
$\bs{n}_0 \supset \bs{n}_1 \supset \dots $ and harmonic functions $h_j \in 
U(\bs{n}_j)\subset U(\bs{n})$ such that $\lim_{n\in\bs{n}_j} (h_j)_{(n)}^* = 0$ and $\dist(h_j,\,g_j)<1/j$.
Since the sequence $g_j$ is dense, the last inequality shows that also $\{h_j\}$ is dense in $H_Q$, hence so is its linear span $E$.

Now let $h\in H_Q$, $h\neq 0$, of type $h=\sum_{j=1}^m c_j h_j$, with, say, $c_m\neq 0$. We only need to show that $h\in U(\bs{n})$. Choose any measurable function $f$ on $\Omega$. Again by
definition of $U(\bs{n})$, there is a subsequence $\lambda_n\in \bs{n}_m \subset \bs{n}$ such that $(h_m )_{(\lambda_n)}^*\to f/c_m$ in measure. But we know that $(h_{j})_{(\lambda_n)}^*\to 0$ for $0\leqslant j <m$, by construction of the $h_j $'s and the fact that the sequences $\bs{n}_j$ are nested. It follows that $h_{\lambda_)}^*\to f$ in measure, hence $f\in U(\bs{n})$ and the proof is finished.
\end{proof}

We shall need the following elementary arithmetic lemma.
\begin{lemma}\label{lemma:lower_density_with_ell}
For $k\geqslant 1$, let $s\in\mathN$ be the power of 2 in the prime decomposition of $k$, that is,
the number such that $k/2^s$ is odd,  write $\ell(k)=s+1$, $r_1=1$, and set inductively \begin{equation}\label{eq:r_k}
r_k=r_{k-1}+\ell(k),
\end{equation}
that is, 
\begin{equation}\label{eq:r_k_as_sum}
r_k=\sum_{s=1}^k \ell(s).
\end{equation}
Then, for every $m\geqslant 1$, the set
$\{ r_n\colon  n\in\mathN,\; \ell(n)=m\}$
has strictly positive lower density in $\mathN$.
\end{lemma}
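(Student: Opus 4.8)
The plan is to reduce everything to a clean closed form for $r_n$ and then to an elementary lattice-point count. First I would rewrite $\ell$ in terms of the $2$-adic valuation: since $\ell(k)=s+1$ where $2^s$ is the exact power of $2$ dividing $k$, we have $\ell(k)=v_2(k)+1$, with $v_2(k)$ the exponent of $2$ in $k$. Hence
\[
r_n=\sum_{k=1}^{n}\ell(k)=n+\sum_{k=1}^{n}v_2(k).
\]
By Legendre's formula, $\sum_{k=1}^{n}v_2(k)=v_2(n!)=\sum_{i\geq 1}\lfloor n/2^i\rfloor=n-s_2(n)$, where $s_2(n)$ is the sum of the binary digits of $n$. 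This gives the exact identity $r_n=2n-s_2(n)$, and in particular the two-sided bound $n\leq r_n\leq 2n$ for all $n$. Only the upper bound $r_n\leq 2n$ is actually needed below, and it follows immediately from $\sum_{i\geq 1}\lfloor n/2^i\rfloor\leq\sum_{i\geq 1}n/2^i=n$, reflecting the fact that the mean value of $\ell$ is $\sum_{m\geq 1}m\,2^{-m}=2$.

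Next I would translate the counting problem for the set $A_m=\{r_n:\ell(n)=m\}$ into a counting problem for the index $n$. Because every $\ell(k)\geq 1$, the sequence $(r_n)$ is strictly increasing, so $n\mapsto r_n$ is injective and
\[
\bigl|A_m\cap[1,N]\bigr|=\bigl|\{n:\ell(n)=m,\ r_n\leq N\}\bigr|.
\]
From $r_n\leq 2n$, the condition $n\leq N/2$ already forces $r_n\leq N$, so the right-hand side is at least $\bigl|\{n\leq N/2:\ell(n)=m\}\bigr|$.

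The final step counts integers $n\leq K$ (with $K=\lfloor N/2\rfloor$) satisfying $\ell(n)=m$, i.e.\ $v_2(n)=m-1$. These are exactly the numbers $n=2^{m-1}j$ with $j$ odd, so their number equals the number of odd $j\leq K/2^{m-1}$, which is at least $K/2^{m}-1$. Combining the three steps,
\[
\bigl|A_m\cap[1,N]\bigr|\geq\frac{\lfloor N/2\rfloor}{2^{m}}-1\geq\frac{N}{2^{m+1}}-\frac{1}{2^{m}}-1,
\]
and dividing by $N$ and letting $N\to\infty$ yields lower density at least $2^{-(m+1)}>0$, which is the claim.

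I do not expect a serious obstacle: the one step that makes everything transparent is the identity $r_n=2n-s_2(n)$ (equivalently, the linear upper bound $r_n\leq 2n$). Once this is in hand, positivity of the lower density is a routine count, and in fact the density comes out to be exactly $2^{-(m+1)}$. If one preferred to avoid Legendre's formula, the bound $r_n\leq 2n$ could instead be obtained directly by summing $\sum_{k\leq n}v_2(k)$ as $\sum_{i\geq 1}\lfloor n/2^i\rfloor$, so even the closed form is not essential---only the heuristic that $\ell$ has average $2$.
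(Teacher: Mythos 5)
Your proof is correct. The overall strategy coincides with the paper's (count the indices $n$ with $\ell(n)=m$ below a linear threshold determined by the growth of $r_n$), but the way you obtain the growth estimate is genuinely different and, I think, cleaner. The paper proves only $r_{2^N}=2^{N+1}-1$, by induction on $N$ using the self-similarity $\ell(k+2^N)=\ell(k)$ for $k<2^N$, and then has to run the $\liminf$ along the subsequence $j=r_{2^N}$ (a reduction it states rather tersely; it is valid because the counting function is nondecreasing and $r_{2^{N+1}}/r_{2^N}$ is bounded, but that justification is left implicit). You instead get the closed form $r_n=2n-s_2(n)$ for \emph{every} $n$ from Legendre's formula $\sum_{k\leqslant n}v_2(k)=n-s_2(n)$, which lets you work with arbitrary $N$ directly, removes the subsequence step entirely, and yields the sharper conclusion that the lower density is at least $2^{-(m+1)}$ (indeed, combined with the lower bound $r_n\geqslant 2n-\log_2 n-1$, exactly $2^{-(m+1)}$, as you note). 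The level-set count is essentially the paper's identity $|\{k\leqslant 2^N:\ell(k)=m\}|=2^{N-m}$, which you reprove for general cutoffs $K$ via the parametrization $n=2^{m-1}j$ with $j$ odd; the small error terms ($-1$ from rounding the number of odd $j$, and from $\lfloor N/2\rfloor$) are handled correctly and vanish after dividing by $N$. In short: same skeleton, but your key lemma (the closed form for $r_n$) replaces the paper's inductive computation and buys both a simpler limiting argument and an exact density.
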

\begin{proof} 
We first observe that,
for $1\leqslant m\leqslant N$, 
\begin{equation}\label{eq:how_often_the_valies_repeat}
|\{ k \colon  1\leqslant k \leqslant 2^N, \;\ell(k)=m\}| = 2^{N-m}.
\end{equation}
Indeed, $\ell(k)=1$ only if $k$ is odd, hence exactly one half of the integers between 1 and $2^N$ satisfy $\ell(k)=1$ and so the identity above holds for $m=1$. Similarly, $\ell(k)= m$ if and only if $k$ is a multiple of $2^{m-1}$ but not of $2^{m-2}$, hence exactly a fraction $2^{-m}$ of the integers between 1 and $2^N$ satisfy $\ell(k)=m$, that is, $2^{N-m}$ integers.
Next, we show that, for every $N$,
\begin{equation}\label{eq:r_{2^N}}
r_{2^N} = 2^{N+1}-1\,.
\end{equation}
This is clear if $N=0$. We proceed by induction on $N$;  assuming $r_{2^N} = 2^{N+1}-1$, we show that $r_{2^{N+1}} = 2^{N+2}-1$.
Indeed, 
\begin{equation}\label{eq:r_{2^{N+1}}}
r_{2^{N+1}} = \sum_{k=1}^{2^{N+1}} \ell(k) = \sum_{k=1}^{2^{N}-1} \ell(k) + \ell(2^N) + \sum_{k=2^{N}+1}^{2^{N+1}-1} \ell(k) + \ell(2^{N+1}).
\end{equation}
Note that, if $k=2^s p$ for  $s<N$ and $p$ odd, then 
$k+2^N=2^s(p+2^{N-s})$, but $p+2^{N-2}$ is odd, hence $\ell(k+2^N)=s+1=\ell(s)$.
Therefore, if we write
$A=\sum_{k=1}^{2^{N}-1} \ell(k)$ and $B=\sum_{k=2^{N}+1}^{2^{N+1}-1} \ell(k)$, then $A=B$.
Moreover, by definition of $\ell$, we have $\ell(2^j)=j+1$ for every $j$. Therefore \eqref{eq:r_{2^{N+1}}} becomes
\begin{equation*} 
r_{2^{N+1}} = 2A + 2N + 3 = 1+ 2 \sum_{k=1}^{2^{N}} \ell(k) = 1 + 2r_{2^N},
\end{equation*}
by \eqref{eq:r_k_as_sum}. Now, by the induction hypothesis, $r_{2^{N+1}}=2^{N+2}-1$, that is \eqref{eq:r_{2^N}}.

Let $m\geqslant 1$.
Since $r_k$ is increasing, the statement follows if we prove that
\[
\liminf_{j\to\infty} \frac { |\{ n\in\mathN\colon   r_n \leqslant j,\; \ell(n)=m\}| }  {j} >0.
\]
Choose $j=r_{2^N}$. Then it is enough to show that
\[
\inf_{N\geqslant m} \frac { |\{ n\in\mathN\colon    r_n \leqslant r_{2^N},\; \ell(n)=m\}| }  {r_{2^N}} >0.
\]
But $r_{2^N}=2^{N+1}-1$, hence the last inequality holds if
\[
\inf_{N\geqslant m} \frac { |\{ n\in\mathN\colon    r_n \leqslant 2^N,\; \ell(n)=m\}| }  {2^N} >0.
\]
This inequality follows from \eqref{eq:how_often_the_valies_repeat}.
\end{proof}

We have seen that the martingale $\{h_n^*\}$ associated to a $\bs n$-universal function $h$ is dense in $M$, and, if linear branches have finite lengths, the space $U(\bs n)$ is dense in $H_Q$. Now we look at the subsequence of $\bs n$ of all indices $n$ such that $\{h_n^*\}$ approximates a given measurable function within a given tolerance. It will turn out that, if linear branches have bounded length, this subsequence has positive lower density.

\begin{definition}\label{def:frequently_dense}
Let $(M,\nu)$ be as in Definition \ref{def:universal_functions}.
A function $h\in H_Q$ is frequently universal harmonic 
if, for every non-empty open set $O\subset M$, the set $N(h,O)=\{n\in\mathN\colon  h_n^* \in O\}$ has positive lower density. The set of frequently universal harmonic functions is denoted by $\FU$.
\end{definition}

\begin{theorem}\label {theo:frequent_density}
If all vertices of $T$ have at least two descendants (that is, the root vertex has at least two neighbors and all other vertices at least three), then
 $\FU$ is dense in $H_Q$.
\end{theorem}
\begin{proof} 
We first show that $\FU$ is not empty. Let $h\in U$; in particular, the sequence $h_n ^*\in \calK_n$ is dense in $M$.
 For $g\in M$, denote by $B(g,\,s)$ the ball in $M$ with center $g$ and radius $s$. Then $f\in H_Q$ belongs to $\FU$ provided that
$f_{r_k}^* \in B(h^*_{\ell(k)},\;1/2^{\ell(k)})$, where the sequences $r_k$ and $\ell(k)$ are as in Lemma \ref{lemma:lower_density_with_ell}. The argument in the last part of the proof of Theorem \ref{theo:density_of_Utilde} shows that
every function $h$ defined in the ball $B_n\subset T$ and harmonic in $B_{n-1}$ can be extended, for every $m>0$ to a function, denoted again by $h$, defined in $B_{n+m}$ and harmonic in $B_{n+m-1}$, such that $h^*_{n+m} \in B(h^*_{n},\, 2^{-m})$. Now the existence of a frequently universal harmonic function $h$ follows by inductively applying this property to $n=r_{k-1}$, $m=\ell(k)$, and noticing that $n+m=r_k$ by \eqref{eq:r_k}, hence
there exists $h\in H_Q$ such that $h^*_{r_k} \in B(h^*_{\ell(k)},\, 2^{-\ell(k)})$ for every $k$.

We can repeat this construction by starting with a fixed harmonic function $g$ and setting $h^{(n)}=g$ in a ball $B_n$, and then extending $h^{(n)}$ to a frequently universal harmonic function. Since $h^{(n)}$ coincides with $g$ on a ball of radius $n$ and $n$ can be taken arbitrarily large, this shows that there exists a sequence $h^{(n)}\in \FU$ that converges to $g$, hence $\FU$ is dense in $H_Q$.
\end{proof}

\begin{remark} By a simple variant of the argument, Theorem \ref{theo:frequent_density} holds under the slightly more general assumption that there is a constant $c$ such that each linear branch in $T$ has length bounded by $c$. 
We just give an idea of why this is so. The general tree $\widetilde T$ with finite linear branches
is obtained from the tree $T$ without linear branches of Theorem \ref{theo:frequent_density} by inserting into each edge $e=[v,w]$ (where $w_-=v$) a chain of $m(v)-1$ consecutively adjacent vertices: in place of $e$ we have a linear branch of length $m(v)-1$, that is of $m(v)$ edges.  
The boundary $\widetilde\Omega$ of $\widetilde T$ is the same as the boundary $\Omega$ of $T$.
Let us now consider the simple case where $m(v)$ is radial around $o$, say $m(v)=m_n$ if $|v|=n$.
Here the martingales $\{h_n^*\}$ are sequences of functions on $\widetilde\Omega$ that repeat their values on the boundary arcs subtended by the vertices along each of the linear branches, that is $m_n$ times (actually, the arcs in each such chain coincide as sets, what changes is only their generation). Therefore this shifts the indices of the martingale.  If $m_n$ is bounded by some constant $c$, then the martingale repeats its values at most $c$ times at each generation, hence its variation is slower, but this does not affect frequent universality. But if $m_n$ is unbounded, then the martingale slows down at a higher and higher rate, and frequent universality may fail. 
 This trivial example shows that Theorem  \ref{theo:frequent_density} cannot hold without the assumption that
the linear branches of $T$ have bounded length.

More generally, if $m(v)$ is not necessarily radial but is bounded by $c$, we can prove frequent universality by replacing the sequence $h^*_{r_k}$ in the proof of Theorem \ref{theo:frequent_density} with $h^*_{N r_k}$, where $N$ is a suitable (large) integer depending on $c$.
\end{remark}

Let now 
$
\XU$ be the set of all $h\in H_Q$ such that  $N(h,O)$ has upper density 1 for every non-empty $O\subset  M.
$

\begin{theorem}\label{theo:meager_in_martingale_space}
Under the hypothesis of of Theorem \ref {theo:frequent_density}, 
$\XU$ is a dense $G_\delta$ subset of $H_Q$ disjoint from $\FU$, and so
 $\FU$ is meager  in $H_Q$.
\end{theorem}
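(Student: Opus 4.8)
The plan is to prove the three assertions in turn: that $\XU$ is a $G_\delta$, that it is dense, and that it is disjoint from $\FU$; the meagerness of $\FU$ is then immediate, since $\FU$ will be contained in the complement $H_Q\setminus\XU$ of a dense $G_\delta$.

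For the $G_\delta$ property, I would write $\overline d$ and $\underline d$ for upper and lower density in $\mathN$. Since $\calK=\bigcup_n\calK_n$ is dense in $M$ (as used in the proof of Theorem \ref{theo:density_of_Utilde}), the countable family of $\calK_n$-functions with values in $\mathQ+i\mathQ$ is dense, so $M$ is separable; fix a countable base $\{B_i\}$ of nonempty open sets. Because $\overline d$ is monotone and every nonempty open $O\subset M$ contains some nonempty $B_i$, one has $\XU=\bigcap_i\{h:\overline d(N(h,B_i))=1\}$. Now $\overline d(A)=1$ holds exactly when, for every $p$ and every $K$, there is $N\geq K$ with $|A\cap[1,N]|/N>1-1/p$ (using $|A\cap[1,N]|/N\leq 1$). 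Applying this with $A=N(h,B_i)$ and recalling from the proof of Theorem \ref{theo:density_of_Utilde} that $h\mapsto h_n^*$ is continuous, each set $\{h:h_n^*\in B_i\}$ is open; the set where at least a prescribed number of the indices $n\leq N$ satisfy $h_n^*\in B_i$ is a finite union of finite intersections of such sets, hence open. Thus $\XU$ is a countable intersection of open sets.

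For density I would use the Baire category theorem in the complete metric space $H_Q$ (a closed subspace of $\mathC^T$): it suffices that each open set $G_{i,p,K}=\bigcup_{N\geq K}\{h:|N(h,B_i)\cap[1,N]|/N>1-1/p\}$ be dense. Fix $g\in H_Q$ and $\delta>0$, choose $f\in\calK_{n_0}\cap B_i$ and $\rho>0$ with $B(f,\rho)\subset B_i$, and run the construction from the proof of Theorem \ref{theo:density_of_Utilde}: set $h=g$ on a ball $B_{N_0}$ large enough that $\dist(h,g)<\delta$, and, following geodesics of length $k>\log_2(1/\rho)$ out of each vertex of length $N_0$ along which $q((u_i)_-,u_i)\leq\frac12$, extend $h$ to be forward-harmonic up to level $m$ with $h=g$ on $B_{N_0}$ and $\distnu(h_m^*,f)\leq 2^{-k}<\rho$, so that $h_m^*\in B_i$. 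The one new observation is that, once $h$ is extended constantly on each sector subtended by a vertex of length $m$, one has $h_n^*=h_m^*$ for all $n\geq m$; hence $N(h,B_i)\supset\{n:n\geq m\}$, and for $N\geq\max(K,pm)$ we get $|N(h,B_i)\cap[1,N]|/N\geq(N-m+1)/N>1-1/p$, i.e. $h\in G_{i,p,K}$. Thus $G_{i,p,K}$ is dense, and by Baire $\XU$ is a dense $G_\delta$.

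Finally, disjointness. Since $M$ contains at least two points ($\distnu(0,1)=\frac12$), pick disjoint nonempty open sets $O_1,O_2\subset M$. If $h\in\XU\cap\FU$, then $\overline d(N(h,O_1))=1$, so $\underline d(\mathN\setminus N(h,O_1))=1-\overline d(N(h,O_1))=0$; since $O_1\cap O_2=\emptyset$ forces $N(h,O_2)\subset\mathN\setminus N(h,O_1)$, monotonicity gives $\underline d(N(h,O_2))=0$, contradicting $h\in\FU$. Hence $\XU\cap\FU=\emptyset$, so $\FU\subset H_Q\setminus\XU$ lies in the complement of a dense $G_\delta$ and is therefore meager. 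The only genuine work sits in the density step, and even there the content is merely the observation that the branching construction of Theorem \ref{theo:density_of_Utilde}, with its sector-constant tail, already produces density-one visits to a single ball $B_i$; Baire then upgrades this to a function visiting every $B_i$ with upper density one.
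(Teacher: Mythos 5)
Your proof is correct and follows essentially the same route as the paper: the same $G_\delta$ decomposition of $\XU$ over a countable family of open sets in $M$, the same Baire-category density argument reusing the construction of Theorem \ref{theo:density_of_Utilde}, and the same disjointness argument via two disjoint open sets and density counting. Your one addition --- observing that the sector-constant tail forces $h_n^*=h_m^*$ for all $n\geq m$, so a single application of that construction already yields a density-one set of visits to $B_i$ --- is exactly the detail the paper compresses into ``this follows from exactly the same argument of Theorem \ref{theo:density_of_Utilde}'', and it is worth stating explicitly as you do.
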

\begin{proof}
Choose a countable family of non-empty open sets in $O_j\subset M$, and, in analogy with the proof of Theorem \ref{theo:density_of_Utilde}, let
\begin{align*}
E(j,m,n)=\{f\in H_Q(T)\colon  &\exists\, q>\bigl(1-\frac1m\bigr)\,n \text{ and } 1\leqslant k_1 \leqslant k_2\dots \leqslant k_q
 \leqslant n\colon \notag  \\[.1cm]
 & f^*_{(r_{k_l})}\in O_j \quad\text{for every }\,l=1,\dots,q\}.
\end{align*}
 Then each $E(j,m,n)$ is open in $H_Q(T)$ and 
 \begin{equation}\label{eq:XU_as_intersection_of_unions_of_sets_E(j,m,n)}
 \XU=\cap_{j,m,N}\cup_{n\geqslant N} E(j,m,n).
 \end{equation}
 Therefore $XU$ is a $G_\delta$ set.

 We claim that $\FU$ is disjoint from $\XU$ (see also \cite{KNP}). Indeed, let $O_1$ and $O_2$ be disjoint non-empty open sets in $\mathC$. Clearly, $N(f,O_1)\cap N(f,O_2)=\emptyset$. By frequent density, there exist $\delta>0$ and $m\in\mathN$ such that, for every $N\geqslant m$, 
\begin{equation*}\label{eq:frequent_density}
\frac{|\{n\leqslant N,\,n\in N(f,\,O_1)\}|}N > \frac{\delta}2\;.
\end{equation*}
Now assume that $f$ is also in $\XU$. This means that
\begin{equation*}\label{eq:upper_density_1}
\frac{|\{n\leqslant N_0,\,n\in N(f,\,O_2)\}|}{N_0} > 1- \frac{\delta}2
\end{equation*}
for some $N_0\geqslant m$. Choose $N=N_0$; since $N(f,\,O_1)$ and $N(f,\,O_2)$ are disjoint, this means that $|\{n\leqslant N_0,\,n\in N(f,\,O_1)\cup N(f,\,O_2) \}| > N_0$, a contradiction. This proves the claim.

Let us prove that $\XU$ is dense in $H_Q$ (see also \cite{Pa}).  By \eqref{eq:XU_as_intersection_of_unions_of_sets_E(j,m,n)} and Baire's theorem, it suffices to show that $\cup_{n\geqslant N} E(j,m,n)$ is dense in $H_Q$ for every $j,m,N$.

It is clear that
density is equivalent to the following property:  for every  $\varepsilon>0$, $j,m,N>0$, $g\in H_Q$ and for every finite sequence $v_1,\dots,v_N$ of vertices, there is $h\in H_Q$, and $n\geqslant N$ such that $|g(v_i)-h(v_i)|<\varepsilon$ for $i=1,\dots,N$ and $h\in E(j,m,n)$.
But this follows from precisely the same argument of Theorem \ref{theo:density_of_Utilde}.
Now the claim implies that $FU$ is meager, and the proof is complete.
\end{proof}

\section{Frequently universal harmonic functions of very regular transition operators}\label{Sec:Universality_for_very_regular_operator}

In the previous Section we have proved universal properties for harmonic functions of forward-only nearest neighbor transition operators on trees, naturally associated to boundary martingales. Here we briefly show how to extend these results to harmonic functions for a large class of transient nearest neighbor transition operators that are not forward-only.
We begin by recalling some known results on harmonic functions on trees equipped with transient transition operators and their boundary representation; most of the results are taken from \cite{Ca,KPT}.

We suppose that the transition operator $P$ on $T$ is transient, in the sense that the random walk that it generates leaves every finite set with positive probability. It is well known that the assumption of transience is equivalent to the existence of non-constant positive harmonic functions (see 
\cite{KSK}). If $P$ is transient and $v_n$ is the random vertex at time $n\in \mathN$ visited by the random walk starting at $v_0$, then the probability $U(v_0\,,\,v)$ of first hit at $v$ satisfies the inequality
\begin{equation}\label{eq:hitting_probability}
U(v_0\,,\,v):= \Pr\{\exists\, n\in\mathN\colon  v_n=v,\, v_j\neq v \;\forall\,j=1,\dots,n\}<1
\end{equation}
for every $v\in T$. We call a finite set of vertices $\calC$ a \emph{contour} if no two vertices of $\calC$ are neighbors and $T\setminus \calC$ splits as union of finitely many connected components only one of which (called the interior $\Interior\calC$ of $\calC$) is bounded. If $\calC$ is a contour and $v_0\in\Interior\calC$, then $v\mapsto U(v_0\,,\,v)$ is clearly a probability on $\calC$. If $h$ is harmonic in $\Interior\calC$, then $h(v_0)=\sum_{v\colon v\in\calC} U(v_0\,,\,v)\,h(v)$; this summation formula is the solution of the Dirichlet problem on $\Interior\calC$ \cite{PW}.

It is clear that, for each contour $\calC$, every harmonic function on $\calC\cup \Interior \calC$ attains its maximum on $\calC$.

As before, let $v_n$ be the random vertex at time $n$ of the random walk induced by $P$ starting at  $v_0$. Since $P$ is transient, there exists $v_\infty:=\lim_n v_n\in \Omega$ almost surely. The \emph{hitting distribution} $\nu_{v_0}$ is the probability measure on $\Omega$ (with respect to the Borel $\sigma-$algebra) defined on Borel sets $O\subset\Omega$ by $\nu_{v_0} (O) = \Pr\{v_\infty \in O\}$. The measures $\nu_v$ are mutually absolutely continuous, and the Radon--Nikodym derivative $K_{v_0}(v,\,\omega)=d\nu_v/d\nu_{v_0}(\omega)$ is called the \emph{Poisson kernel} induced by $P$.
If $h$ is a non-negative $P-$harmonic function, then the probabilistic Fatou theorem states that $\lim_m h_m^*(V_\infty)=\lim_n h(V_n)$ exists $\nu_{v_0}$ almost surely.

From now on we write $K(v,\,\omega)$ instead of $K_o(v,\,\omega)$, and $\nu$ instead of $\nu_o$. We set
$\Poisstr  h^* (v)= \int_\Omega h^*(\omega)\,K(v,\,\omega)\,d\nu_{v}(\omega)$; the operator $\Poisstr$, from $L^1(\Omega,\,\nu)$ to $H_P(T)$, is called the \emph{Poisson transform} induced by $P$.

A stopping time argument shows that $U(v,w)=U(v,u)\,U(u,w)$ whenever $u$ belongs to the geodesic path between $v$ and $w$. For every vertex $v\neq o$, consider the sector $S(v)=\{u\colon v\leqslant u\}$. Then $K(v,w):=U(v,w)/U(o,w)$ is constant on each sector $S(u)$ with $|u|=|v|$.
It follows from  the definition of $\nu$ that $K(v,\,\omega)=\lim_{w\to\omega} K(v,w)$. Hence the Poisson kernel is locally constant on $\Omega$; more precisely, it is constant on each arc $I(u)$ with $|u|=|v|$, that is, it is $A_{|v|}-$measurable.
 From this, the celebrated (deterministic) Fatou theorem follows (see \cite{Fa} for its analog on the disc): for every $h\in H_P(T)$ and $v\in T$,  one has $h(v)=\int_\Omega h^*(\omega)\,d\nu_{v}(\omega)= \int_\Omega h^*(\omega)\,K_{o}(v,\,\omega)\,d\nu_{o}$, where $h^*$ denotes the martingale associated to $h$ introduced in \eqref {eq:lifting}; here the integral of the martingale makes sense because $K_{o}(v,\,\omega)$ is locally constant on $\Omega$ for every $v$.

Then, for every harmonic function $h$ one has $\Poisstr h^* (v) = \Poisstr (\pi_{|v|} h^*) (v)$; in this way, the Poisson transform induced by $P$  of $h^*$ can be regarded as the Poisson transform of its associated martingale. It is clear that the $P$-Poisson integral of every martingale is $P-$harmonic (see also \cite{Ca}). Conversely, for every transient operator $P$, it was proved in \cite{KPT,PW} that every $P-$harmonic function (not only positive) is the $P$-Poisson transform of a martingale. 
But we have seen in Subsection \ref{SubS:Martingales} that every probability measure $\nu$ on $\Omega$ gives rise to a forward-only transition operator $Q$ whose harmonic functions are in bijective correspondence with the boundary martingales defined by $\nu$ via \eqref{eq:projections}. Note that different very regular operators $P$ can give rise to the same hitting distribution $\nu$ on $\Omega$, hence to the same forward-only transition operator $Q$. If $[v_0,v_1,\dots,v_n=v]$ denotes the geodesic path from $v_0$ to $v$, it follows from the fact that $Q$ is forward-only that 
\begin{equation}\label{eq:factoring_of_U}
U(v_0,v)=\prod_{j=1}^{n-1} q(v_{j-1},v_j)
\end{equation}
Hence, by \eqref{eq:hitting_distribution_of_forward_only_operator_is_multiplicative}, the measure $\nu$ is the hitting distribution of $Q$ on $\Omega$, and the family of projections in \eqref{eq:projections} gives rise to the Poisson transform induced by $Q$. This confirms that all $Q-$harmonic functions arise in this way starting from the boundary martingales defined by integration on arcs with respect to the measure $\nu$, and now we know that the same is true for $P-$harmonic functions expressed as the Poisson transforms (induced by $P$) of $\nu-$martingales.

This establishes a one-to-one correspondence between $H_P$ and $H_Q$ whenever $P$ and $Q$ give rise to the same hitting distribution on $\Omega$.  We now extend our results to density properties of martingales, now identified with harmonic functions with respect to a transient transition operator $P$. The key property in the proofs was the uniform decay condition \eqref{eq:decay_when_jumping_to_a_forward_neighbor}, that, by \eqref{eq:factoring_of_U}, is equivalent to an exponential decay of the hitting probability $U$, hence of the Green kernel $G(u,v)$.
 Then our results can be extended to transition operators that allow a similar uniform decay conditions.

\begin{definition}[Very regular transition operators]\label{Def:very_regular_operators}
A nearest-neighbor transition operator $P$ on $T$ is \emph{very regular} if
it satisfies the uniform bounds $p(u,v)\geqslant \delta$ and $p(u,u_-)\leqslant \frac 12 -\delta$ for some $\delta>0$ and all $u\sim v$, $u\neq o$. Note that these bounds force the tree to be locally finite.
\end{definition}

It has been proved in \cite{KPT}
that, for a very regular $P$, a similar uniform bound is satisfied by the probability $U(v_-,v)$ that the random walk induced by $P$ visits $v$ for the first time after starting at $v_-$. Namely,
$U(v_-,v)\leqslant 1-\epsilon$, where $\epsilon>0$ is a universal constant that depends only on the bound $\delta$ of Definition \ref{Def:very_regular_operators}
(namely, $\epsilon=4\delta/(1+2\delta)$). Moreover, for every vertex $v$ the hitting distribution $\nu=\nu_o$ of the random walk starting at $o$ of any transient operator $P$ on the boundary satisfies $\nu(I(v))=U(o,v) (1-U(v,v_-))/(1-U(v_-,v)U(v,v_-))$ and so, for very regular transition operators, the right hand side $k(v):=U(o,v) (1-U(v,v_-))/(1-U(v_-,v)U(v,v_-))$ is of the order of $U(o,v)$ (in the sense that there exist constants $C_+>C_->0$  independent of $v$ such that $C_- < k(v) < C_+$). Note that $k(v) \leqslant U(o,v)$.

It follows easlily  that the results from Section 3 transfer without changes to the context of very regular operators:

\begin{corollary} Theorems
\ref{theo:density_of_Utilde},
\ref{theo:spaceability},
\ref{theo:frequent_density}
and 
\ref{theo:meager_in_martingale_space}
hold, more generally, for each very regular transition operator on $T$.
\end{corollary}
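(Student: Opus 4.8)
The plan is to reduce everything to the forward-only case already settled, using the one-to-one correspondence between $H_P(T)$ and $H_Q(T)$ set up above. Recall that both spaces are parametrized by the same object: the boundary martingales $\{\pi_n\tilde h\}$ relative to the hitting distribution $\nu=\nu_o$ of $P$, which simultaneously determines the forward-only operator $Q$ through the multiplicativity rule \eqref{eq:hitting_distribution_of_forward_only_operator_is_multiplicative}. A martingale is sent to its $Q$-harmonic function $f^\dag$ on one side and to the $P$-harmonic Poisson transform $\calK\tilde f$ on the other, and the universal properties in Theorems \ref{theo:density_of_Utilde}, \ref{theo:spaceability} and \ref{theo:frequent_density} are properties of this shared martingale (frequent density in $M$ of its terms). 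Thus it suffices to check two things: first, that the constructions proving the three theorems survive when the measure $\nu$ is the hitting distribution of a very regular $P$; and second, that the correspondence is a homeomorphism for the pointwise topologies, so that the density, $G_\delta$, dense-subspace and frequent-density conclusions transfer verbatim from $H_Q$ to $H_P$.

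For the first point, observe that the only place the operator entered the proofs was the geometric decay of arc measures along descending geodesics: the condition \eqref{eq:decay_when_jumping_to_a_forward_neighbor} was used solely to guarantee $\nu(I(u_k))/\nu(I(u))\leq 2^{-k}$, after which \eqref{eq:triviality_on_distance_of_two_functions_that_coincide_in_a_subset} controlled the distance in $M$ irrespective of the (possibly large) correction values placed on the small arcs. For a very regular $P$ the same decay is furnished by the estimates recalled from \cite{KPT}: since $U(v_-,v)\leq 1-\epsilon$ and $k(v)=\nu(I(v))$ satisfies $C_-<k(v)/U(o,v)<C_+$, the factorization $U(o,w)=U(o,v)\,U(v,w)$ for $w>v$ yields
\[
\frac{\nu(I(w))}{\nu(I(v))}=\frac{k(w)}{k(v)}\leq \frac{C_+}{C_-}\,U(v,w)\leq \frac{C_+}{C_-}\,(1-\epsilon)^{\dist(v,w)}.
\]
Hence, below any vertex of length $N$, descendants subtend arcs of arbitrarily small relative measure; choosing the descent depth $k$ with $(C_+/C_-)(1-\epsilon)^{k}<1/s$ replaces the choice $k>\log_2 s$ of the forward-only proofs, and nothing else in the arguments changes. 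The standing hypothesis that all linear branches of $T$ are finite (compatible with the local finiteness forced by very regularity) still supplies the bifurcation vertices needed for the freedom to correct values on those small arcs.

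For the second point, note that because the Poisson kernel $K(v,\cdot)$ is $A_{|v|}$-measurable, the value $\calK\tilde f(v)$ is a finite linear combination of the martingale values on the circle $C_{|v|}$, which is finite by local finiteness; this linear system is exactly the Poisson representation on $C_{|v|}$ and is invertible, so pointwise convergence on $T$ is equivalent, on either side of the correspondence, to convergence of all martingale terms. The correspondence is therefore a homeomorphism, and the topological conclusions transfer. The ball-matching step survives as well: if a modified martingale agrees with that of a given $g$ up to level $N$, then, $K(v,\cdot)$ being $A_N$-measurable for $|v|\leq N$, the Poisson transforms agree throughout $B_N$, which is the exact analogue of the agreement obtained from the maximum principle in the forward-only proofs.

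The substantive point, and the step I expect to require the most care, is this homeomorphism: one must verify that the finite Poisson system on each circle is genuinely invertible and that the ball-matching is preserved under passage to the Poisson transform. Both reduce to the $A_{|v|}$-measurability of the Poisson kernel together with local finiteness, already recorded above, so once these are in place the three theorems, and with them the density of $U(\bs n)$, the dense subspace contained in $U(\bs n)\cup\{0\}$, and the density of $\FU$, follow for every very regular $P$ by transporting the forward-only results across the correspondence.
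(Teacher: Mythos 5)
Your proposal is correct and follows essentially the same route as the paper: reduce to the forward-only case via the martingale correspondence between $H_P(T)$ and $H_Q(T)$, and replace the decay condition \eqref{eq:decay_when_jumping_to_a_forward_neighbor} by the exponential decay of $\nu(I(w))/\nu(I(v))$ that the bounds $U(v_-,v)\leq 1-\epsilon$ and $C_-<k(v)/U(o,v)<C_+$ from \cite{KPT} furnish for very regular operators. The only difference is that you spell out the bicontinuity of the correspondence (invertibility of the finite Poisson system and preservation of ball-matching), which the paper leaves implicit in its appeal to \cite{KPT,PW}.
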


An interesting problem, not considered here, is to which class of transient transition operators that do not satisfy a uniform bound for the probability $U$ (or equivalently for the Green kernel) our results extend. The  argument above applies to the class of transient transition operators for which every vertex $v$ is the starting point of a geodesic ray $[v,v_1, v_2, \dots)$ along which $k(v_j) \neq 0$ and $\prod_j k(v_j) = 0$, or equivalently $\sum_j (1-k(v_j))=\infty$. For this class of operators, all the results of Section \ref{Sec:Menshov}
still hold. A trivial example of a transition operator $P$ whose harmonic functions do not verify our universality results is the operator such that, at each vertex $v_-$, there is only one vertex $v\sim v_-$, $v>v_-$ such that $p(v_-,v)=1$ and all the other transition coefficients are (necessarily) zero. As seen at the beginning of Section \ref{Sec:Menshov},  our universality statements for $T$ equipped with this $P$ are equivalent to the same statements for a sub-tree $T'$ isomorphic to $\mathN$, where they cannot hold.

\end{document}